\newtheorem{theorem}{Theorem}[section]
\newtheorem{thmy}{Theorem}
\newtheorem{lemma}[theorem]{Lemma}
\newtheorem{corollary}[theorem]{Corollary}
\def\barr{\begin{array}}
\def\earr{\end{array}}
\title{Finite groups whose subgroup graph contains a vertex of large degree}
\author{Marius T\u arn\u auceanu}
\date{February 4, 2025}
\begin{document}

\maketitle

\begin{abstract}
T.C. Burness and S.D. Scott \cite{3} classified finite groups $G$ such that the number of prime order subgroups of $G$ is greater than $|G|/2-1$. In this note, we study finite groups $G$ whose subgroup graph contains a vertex of degree greater than $|G|/2-1$. The classification given for finite solvable groups extends the work of Burness and Scott.
\end{abstract}

{\small
\noindent
{\bf MSC2000\,:} Primary 20D60; Secondary 20D15, 05C07.

\noindent
{\bf Key words\,:} finite group, subgroup graph, vertex degree.}

\section{Introduction}

Recently, mathematicians constructed many graphs which are assigned to groups by different methods (see e.g. \cite{4}). One of them is the \textit{subgroup graph} $L(G)^*$ of a group $G$, that is the graph whose vertices are the subgroups of $G$ and two vertices, $H_1$ and $H_2$, are connected by an edge if and only if $H_1\leq H_2$ and there is no subgroup $K\leq G$ such that $H_1<K<H_2$. Finite groups with planar subgroup graph have been determined by Starr and Turner \cite{12}, Bohanon and Reid \cite{1}, and Schmidt \cite{10}. Also, the genus of $L(G)^*$ has been studied by Lucchini \cite{8}. A remarkable subgraph of $L(G)^*$ is the \textit{cyclic subgroup graph} of $G$, which has been investigated in \cite{13}. Note that the (cyclic) subgroup graph of a group $G$ is in fact the Hasse diagram of the poset of (cyclic) subgroups of $G$, viewed as a simple, undirected graph.

The starting point for our discussion is given by the paper \cite{3}, which classifies the nontrivial finite groups $G$ with the property $\delta(G)>|G|/2-1$, where $\delta(G)$ is the number of prime order subgroups of $G$. Recall that a \textit{generalized dihedral group} is a group of the form $D(A)=A\langle\tau\rangle=A.2$, where $A$ is abelian and $\tau$ acts by inversion. We mention that $E$, $C_n$, $D_8$ denote an elementary abelian $2$-group, a cyclic group of order $n$ and the dihedral group of order $8$, respectively. 

\begin{thmy}
For a nontrivial finite group $G$, we have $\delta(G)>|G|/2-1$ if and only if one of the following holds:
\begin{itemize}
\item[{\rm (I)}] $G=D(A)$, where $A$ is abelian;
\item[{\rm (II)}] $G=D_8\times D_8\times E$, where $E$ is an elementary abelian $2$-group;
\item[{\rm (III)}] $G=H(r)\times E$, where $H(r)\cong(D_8\times\cdots\times D_8)/C_2^{r-1}$ is a central product of $r\geq 1$ copies of $D_8$ so that
\begin{align*}
H(r)=&\langle x_1,y_1,...,x_r,y_r,z\mid x_i^2=y_i^2=z^2=1, \mbox{ all pairs of generators }\\
&\mbox{ commute except } [x_i,y_i]=z\rangle;
\end{align*}
\item[{\rm (IV)}]  $G=S(r)\times E$, where $S(r)$ is the split extension of an elementary abelian group of order $2^{2r}$ ($r\geq 1$) by a cyclic group $C_2=\langle z\rangle$ so that
\begin{align*}
S(r)=&\langle x_1,y_1,...,x_r,y_r,z\mid x_i^2=y_i^2=z^2=1, \mbox{ all pairs of generators }\\
&\mbox{ commute except } [z,x_i]=x_iy_i\rangle;
\end{align*}
\item[{\rm (V)}] $G=T(r)$ is the split extension of an elementary abelian group $A$ of order $2^{2r}$ ($r\geq 1$) by a cyclic group $C_3=\langle z\rangle$ so that
\begin{align*}
T(r)=&\langle x_1,y_1,...,x_r,y_r,z\mid x_i^2=y_i^2=z^3=1, \mbox{ all pairs of generators }\\ 
&\mbox{ commute except } [z,x_i]=x_iy_i \mbox{ and } [z,y_i]=x_i\rangle;
\end{align*}
\item[{\rm (VI)}] $G$ is a group of exponent $3$;
\item[{\rm (VII)}] $G=S_3\times D_8\times E$;
\item[{\rm (VIII)}] $G=S_3\times S_3$;
\item[{\rm (IX)}] $G=S_4$;
\item[{\rm (X)}] $G=A_5$.
\end{itemize}
\end{thmy}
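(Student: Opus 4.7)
The plan is to translate the hypothesis $\delta(G) > |G|/2 - 1$ into a lower bound on the count of involutions, then invoke the classical theory of finite groups with many elements of order $2$. Write $\delta_p(G)$ for the number of subgroups of order $p$ and $i_p(G) = (p-1)\delta_p(G)$ for the number of elements of order $p$, so that $\delta(G) = \sum_p \delta_p(G)$ while $\sum_p i_p(G) \leq |G| - 1$.

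I would begin with the odd order case. If every prime dividing $|G|$ is at least $3$, then
\[
\delta(G) \le \tfrac{1}{2}\sum_p i_p(G) \le (|G|-1)/2,
\]
and the hypothesis forces equality throughout, so every non-identity element has order $3$, which is case (VI). For $|G|$ even, separating involutions from the odd-prime contribution gives
\[
\delta(G) \le i_2(G) + \frac{|G|-1-i_2(G)}{2} = \frac{|G|-1+i_2(G)}{2},
\]
whence $i_2(G) > |G|/2 - 1$. At this stage I would apply the classical results of Miller, Wall and Liebeck on finite groups with many involutions: the generic conclusion is $G = D(A)$ with $A$ abelian, giving case (I), and the exceptions form a short, explicit list.

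The main obstacle is to refine this dichotomy to the genuinely weaker inequality $\delta(G) > |G|/2 - 1$ rather than the cleaner $i_2(G) > |G|/2$. My approach would be to first show that the Sylow $2$-subgroup $P$ of any such $G$ is normal, using that the abundance of involutions prevents $G$ from permuting several Sylow $2$-subgroups nontrivially, and then decompose $P$ according to its involution-rich components. The $2$-group analysis should produce the extraspecial-type central products $H(r)$ and split extensions $S(r)$ of cases (III)--(IV), with $D_8 \times D_8 \times E$ of case (II) appearing as a boundary configuration. For the odd-order quotient $G/P$ one shows it must be trivial, $C_3$, or one of the small groups generating the sporadic cases, yielding (V) and (VII)--(IX). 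Non-solvable possibilities are eliminated in turn by combining the involution bound with the known orders of minimal simple groups, leaving $A_5$ of case (X) as the only survivor.

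The converse is verified family by family using the explicit presentations to enumerate prime order subgroups: for (III)--(V) this reduces to linear algebra over $\mathbb{F}_2$ via the commutator relations, while (VII)--(X) are direct numerical checks. I expect the hardest step in practice to be ensuring completeness in the $2$-group classification of the previous paragraph, since the slack in the inequality is small and each near-extremal configuration must be accounted for individually; this is where the bulk of the case analysis in \cite{3} will concentrate.
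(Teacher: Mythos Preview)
First, note that the paper does not prove Theorem~A at all: it is quoted as the main result of Burness--Scott \cite{3} and used as input for Theorem~1.1. So there is no ``paper's own proof'' to compare against; your proposal is an attempted reconstruction of the argument in \cite{3}.

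More importantly, your reduction in the even-order case is wrong. From
\[
\delta(G)\le i_2(G)+\tfrac{1}{2}\bigl(|G|-1-i_2(G)\bigr)=\tfrac{|G|-1+i_2(G)}{2}
\]
together with $\delta(G)>|G|/2-1$ you only obtain $|G|-2<|G|-1+i_2(G)$, i.e.\ $i_2(G)>-1$, which is vacuous. You cannot conclude $i_2(G)>|G|/2-1$. A concrete witness is $G=A_5$: here $i_2(G)=15$ while $|G|/2-1=29$, yet $\delta(G)=15+10+6=31>29$. The same failure occurs for $S_4$, $S_3\times S_3$, $T(r)$, and the exponent-$3$ groups --- precisely the cases (V)--(X) that distinguish Burness--Scott from Wall. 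So your plan to ``invoke the classical theory of finite groups with many involutions'' and then handle a short list of exceptions cannot work as stated: the hypothesis $\delta(G)>|G|/2-1$ genuinely does not force many involutions, and the odd-prime contribution has to be tracked throughout the argument rather than bounded away at the start.

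Your odd-order paragraph (leading to exponent~$3$) is correct, and your closing remarks about where the difficulty lies are reasonable in spirit; but the central inequality step collapses, and with it the proposed architecture of the proof.
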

\smallskip

In \cite{14}, C.T.C. Wall classifies the finite groups $G$ with the property $i_2(G)>|G|/2-1$, where $i_2(G)$ is the number of involutions in $G$. Since $\delta(G)\geq i_2(G)$, Theorem A generalizes Wall’s result. More precisely, the groups obtained by Wall are those of types (I)-(IV) in Theorem A.

We remark that $\delta(G)$ is the degree of the trivial subgroup in the subgroup graph of $G$. Thus a natural extension of Theorem A is to classify the finite groups $G$ such that $d_{L(G)^*}(H)>|G|/2-1$ for some subgroup $H$ of $G$. 

Our main result is as follows.

\begin{theorem}
Let $G$ be a nontrivial finite solvable group whose subgroup graph contains a vertex of degree strictly greater than $|G|/2-1$. Then $G$ belongs to at least one of the following families of groups:
\begin{itemize}
\item[{\rm 1)}] The groups of order at most $11$ except $C_n$ for $n=5,...,11$.
\item[{\rm 2)}] The groups (I)-(IX) in Theorem A.
\item[{\rm 3)}] Elementary abelian $2$-groups.
\item[{\rm 4)}] $C_2^{s-1}\times C_4$, where $s\geq 1$.
\item[{\rm 5)}] Generalized extraspecial $2$-groups.
\item[{\rm 6)}] $C_p^n\rtimes C_2$, where $p$ is an odd prime and $n$ is a positive integer.
\item[{\rm 7)}] $D_{12}$.
\end{itemize}
\end{theorem}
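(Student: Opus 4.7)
The plan is to split on which vertex $H$ of $L(G)^*$ realises the inequality $d_{L(G)^*}(H)>|G|/2-1$, writing the degree as $d_{L(G)^*}(H)=m(H)+n(H)$, where $m(H)$ is the number of maximal subgroups of $H$ and $n(H)$ is the number of subgroups $K\le G$ in which $H$ is maximal. When $H=\{1\}$, one has $d_{L(G)^*}(\{1\})=\delta(G)$, so Theorem A applies directly; solvability of $G$ excludes case (X) ($A_5$), leaving precisely the types (I)--(IX) that form family 2.

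When $H=G$ the degree equals $m(G)$. For $G$ a $p$-group, $m(G)=(p^{d(G)}-1)/(p-1)$, so the inequality forces $p\in\{2,3\}$ and $\Phi(G)$ trivial, hence $G$ is elementary abelian; the $p=3$ case is already in Theorem A type (VI), and $p=2$ is family 3. For $G$ solvable but non-nilpotent, the identity $m(G)=m(G/\Phi(G))$ together with the prime-power-index property of maximal subgroups in solvable groups forces $|G|$ to be small, placing $G$ in family 1 or identifying it as $D_{12}$ (family 7).

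For $H$ proper and nontrivial I further subdivide on $[G:H]$ and $|H|$. If $[G:H]=2$ then $n(H)=1$ and $m(H)\ge|H|-1$, and the estimate $m(H)\le(|H|-1)/(p-1)$ with equality only for elementary abelian $p$-groups forces $H$ to be an elementary abelian $2$-group; analysing the $G/H$-action on $H$ then yields the $2$-group families 3, 4 and 5 together with the generalised dihedral examples already in family 2. If $|H|=p$ is prime then $m(H)=1$ and $n(H)>|G|/2-2$, so $H$ is contained as a maximal subgroup in unusually many subgroups of orders $p^2$ or $pq$; running a relativised version of the Burness--Scott/Wall inversion argument at the vertex $H$ produces families 5, 6 and 7, together with a few small exceptions absorbed by family 1. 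The remaining $H$ are squeezed by the crude bounds on $m(H)$ and $n(H)$, controlled via the Hasse diagram of $G/\mathrm{Core}_G(H)$, and reduce to one of the preceding subcases or to $|G|\le 11$.

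The main obstacle is the prime-order subcase $|H|=p$: converting a local density condition at a fixed nontrivial vertex into a global structural restriction on $G$ is essentially Theorem A relativised, and this will require re-running the inversion trick at $H$ instead of at $\{1\}$ while carefully tracking how the overgroups of $H$ distribute across $N_G(H)$-orbits in $L(G)^*$. A secondary, more computational obstacle is the classification, in the $H=G$ and $[G:H]=2$ branches, of the low-Frattini-rank $2$-groups that yield families 3, 4 and 5.
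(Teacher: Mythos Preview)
Your case split by the position of $H$ is the same skeleton the paper uses, but two of your branches are not under control.

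The branch $H=G$, $G$ non-nilpotent, is wrong as stated. You assert that $m(G)=m(G/\Phi(G))$ together with prime-power indices ``forces $|G|$ to be small, placing $G$ in family~1 or identifying it as $D_{12}$''. This is false: for every odd prime $p$ and every $n\ge1$ the generalised dihedral group $D(C_p^n)$ has $m(G)=1+p(p^n-1)/(p-1)>p^n-1=|G|/2-1$, so $|G|$ is not bounded. In the paper this is the hardest case: it needs Newton's sharp estimates $|{\rm Max}_p(G)|\le(p^{k+1}-p)/(p-1)$ from Theorem~B(d),(e), a separate arithmetic inequality (Lemma~2.3) to force $|\pi(G)|\le2$, and then Theorem~B(c) to get $\Phi(G)=1$; only after that does one land in family~6 ($C_p^n\rtimes C_2$) or at $D_{12}$. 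Family~6 comes from \emph{this} branch, not from your $|H|=p$ branch.

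Second, you are missing the lemma that actually disposes of the intermediate $H$. The paper proves $d_{L(G)^*}(H)\le |H|+|G|/|H|-2$ (Lemma~2.1); the point is that distinct covers $N_i\supsetneq H$ satisfy $N_i\cap N_j=H$, so $n(H)\le |G|/|H|-1$, while $m(H)\le|H|-1$ by Wall. Feeding $d_{L(G)^*}(H)>|G|/2-1$ into this yields a quadratic in $d=|H|$ which, for $|G|\ge12$, forces $d\in\{1,2,|G|/2,|G|\}$. So your ``$|H|=p$'' subcase is only $p=2$ (and then equality in Lemma~2.1 immediately makes $G$ a $2$-group, giving families 3--5 with no need for a relativised Burness--Scott argument), and your ``remaining $H$'' vanish without any recourse to ${\rm Core}_G(H)$. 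Your plan gestures at ``crude bounds on $m(H)$ and $n(H)$'' but never states this cover-disjointness estimate, and without it the reduction of the middle cases has no mechanism.
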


Note that, among these groups, there are two families not contained in Theorem A: $C_2^{s-1}\times C_4$ with $s\geq 1$ and generalized extraspecial $2$-groups which are not of type (III) in reference to the classification of Theorem A. Recall that a finite $2$-group is called \textit{generalized extraspecial} if $G'=\Phi(G)$ has order $2$ and $G'\subseteq Z(G)$. Here $G'$, $\Phi(G)$, $Z(G)$ denote the derived subgroup, the Frattini subgroup and the centre of $G$, respectively. The structure of these groups is well-known (see e.g. Theorem 2.3 of \cite{2} and Lemma 3.2 of \cite{11}).

We will  prove that for a solvable group $G$, if a vertex $H$ of the subgroup graph $L(G)^*$ has "large degree" (i.e. $d_{L(G)^*}(H)>|G|/2-1$), then this vertex lies at the very top or at the very bottom of the Hasse diagram of the subgroup lattice of $G$. For the proof of Theorem 1.1, we need the following theorem which collects some results on the number of maximal subgroups of a finite solvable group.

\begin{thmy}
Let $G$ be a finite solvable group, ${\rm Max}(G)$ be the set of maximal subgroups of $G$ and $\pi(G)$ be the set of primes dividing the order of $G$. For every $p\in\pi(G)$, we denote by ${\rm Max}_p(G)$ the set of maximal subgroups of $G$ whose index is a power of $p$ and by ${\bf O}^p(G)$ the smallest normal subgroup of $G$ whose index is a power of $p$. Then the following statements hold:
\begin{itemize}
\item[{\rm a)}]{\rm (G.E. Wall \cite{15})} $|{\rm Max}(G)|\leq |G|-1$.
\item[{\rm b)}]{\rm (R.J. Cook, J. Wiegold and A.G. Williamson \cite{5})} If $p=\min\pi(G)$, then
\begin{equation}
|{\rm Max}(G)|\leq\frac{|G|-1}{p-1}\,.\nonumber
\end{equation}Moreover, we have equality if and only if $G$ is elementary abelian.
\item[{\rm c)}]{\rm (M. Herzog and O. Manz \cite{6})} If $p=\min\pi(G)$ and $q=\max\pi(G)$, then
\begin{equation}
|{\rm Max}(G)|\leq\frac{q|G/\Phi(G)|-p}{p(q-1)}\,.\nonumber
\end{equation}
\item[{\rm d)}]{\rm (B. Newton \cite{9})} If $|G|=p^km$, where $p\nmid m$, and $[G:{\bf O}^p(G)]=p^r$, then
\begin{equation}
|{\rm Max}_p(G)|\leq\frac{p^r-1}{p-1}+\frac{p^{k-r+1}-p}{p-1}\,.\nonumber
\end{equation}Moreover, $|{\rm Max}_p(G)|\leq\frac{p^{k+1}-p}{p-1}$\,, and if ${\bf O}^p(G)<G$, then
\begin{equation}
|{\rm Max}_p(G)|\leq\frac{p^k-1}{p-1}\,.\nonumber
\end{equation} 
\item[{\rm e)}]{\rm (B. Newton \cite{9})} If $|G|=p_1^{n_1}\cdots p_u^{n_u}$ is the decomposition of $|G|$ as a product of distinct primes and $p_1^{n_1}=\min\{p_i^{n_i}:1\leq i\leq u\}$, then
\begin{equation}
|{\rm Max}(G)|\leq\frac{p_1^{n_1}-1}{p_1-1}+\sum_{i=2}^u\frac{p_i^{n_i+1}-p_i}{p_i-1}.\nonumber
\end{equation}
\end{itemize}
\end{thmy}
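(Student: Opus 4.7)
The plan is to exploit the decomposition
\[
d_{L(G)^*}(H) = |\mathrm{Max}(H)| + |\mathrm{MinOver}_G(H)|,
\]
where $\mathrm{MinOver}_G(H) = \{K \leq G : H \lessdot K\}$ is the set of subgroups of $G$ that cover $H$ in the Hasse diagram, and to split the analysis according to the position of $H$ in the subgroup lattice. The case $H=\{1\}$ is immediate: $d_{L(G)^*}(\{1\})=\delta(G)$, so Theorem A applies directly, and solvability excludes type (X), leaving families (I)--(IX), which is item 2) of Theorem 1.1.

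For $H=G$, one must classify solvable $G$ with $|\mathrm{Max}(G)|>|G|/2-1$. Theorem B(b) forces $\min\pi(G)\in\{2,3\}$; equality in B(b) gives elementary abelian $2$-groups (family 3) and elementary abelian $3$-groups (absorbed in family 2(VI)). The residual case of strict inequality, handled by combining the finer bounds B(c)--B(e), yields $D_{12}$ (family 7) together with the remaining small-order examples of family 1).

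The most delicate case is $\{1\}<H<G$. Since $[G:H]\geq 2$, we have $|H|\leq|G|/2$, and Wall's bound B(a) gives $|\mathrm{Max}(H)|\leq|H|-1\leq|G|/2-1$, leaving very little slack. The principal sub-case is $[G:H]=2$, where $|\mathrm{MinOver}_G(H)|=1$ forces $|\mathrm{Max}(H)|=|H|-1$; Theorem B(b) then requires $H$ to be elementary abelian of order $|G|/2$, and a case analysis on the extension $1\to H\to G\to C_2\to 1$ (trivial action split/nonsplit versus nontrivial action) produces family 4) ($C_2^{s-1}\times C_4$) and family 5) (generalized extraspecial $2$-groups), while overlapping with the dihedral groups of 2(I). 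The complementary sub-case, $[G:H]>2$ or $|\mathrm{MinOver}_G(H)|\geq 2$, requires $H$ small with many minimal overgroups; exploiting that $[K:H]$ is a prime power for every $K\in\mathrm{MinOver}_G(H)$ (since $G$ is solvable and $H$ is maximal in $K$), so these $K$'s correspond to certain minimal subgroups of a suitable quotient, and applying Theorem B there narrows the possibilities to structures of the form $C_p^n\rtimes C_2$ (family 6).

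The principal obstacle is the middle-vertex case: bounding $|\mathrm{MinOver}_G(H)|$ tightly requires a careful analysis of the local lattice above $H$, and identifying precisely which extensions of the highly constrained $H$ occur inside $G$ is delicate, especially when $H$ is near the bottom and its overgroups are many. The small-order groups in family 1) are best dispatched by direct inspection of their subgroup lattices (possibly computer-assisted), since the asymptotic bounds in Theorem B are too loose to eliminate these sporadic cases by pure counting.
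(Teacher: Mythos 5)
Your proposal does not prove the statement it was asked to prove. The statement is Theorem~B, which is a compilation of quantitative upper bounds on $|{\rm Max}(G)|$ and $|{\rm Max}_p(G)|$ for a finite solvable group $G$, each item attributed to its original source (G.E.~Wall \cite{15}, Cook--Wiegold--Williamson \cite{5}, Herzog--Manz \cite{6}, Newton \cite{9}); the paper itself offers no proof of these bounds, and neither do you. What you have written is instead an outline of a proof of Theorem~1.1, and throughout that outline you \emph{invoke} Theorem~B as a black box (``Wall's bound B(a) gives\dots'', ``Theorem~B(b) forces\dots'', ``applying Theorem~B there''). So nothing in the proposal establishes any of the inequalities a)--e); the argument is circular with respect to the actual target.

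Even read charitably as a sketch of Theorem~1.1, the key quantitative step is missing. The paper's route is Lemma~2.1, $d_{L(G)^*}(H)\leq d+\frac{n}{d}-2$ where $d=|H|$ and $n=|G|$ (obtained by combining Wall's bound on ${\rm Max}(H)$ with a covering argument bounding the number of atoms above $H$ by $\frac{n}{d}-1$). Feeding the hypothesis $d_{L(G)^*}(H)>\frac{n}{2}-1$ into this yields the quadratic inequality $2d^2-(n+2)d+2n>0$, which for $n\geq 12$ forces $d\in\{1,2,\frac{n}{2},n\}$. This is what makes the ``middle vertex'' case tractable: $H$ must have order $2$ or index $2$, and the four resulting cases are each resolved by a specific item of Theorem~B. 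Your sub-case analysis for $1<H<G$ (index $2$ versus ``index $>2$ or many minimal overgroups'') never closes off the possibility of intermediate orders of $H$, because you have no analogue of the inequality $d+\frac{n}{d}-2>\frac{n}{2}-1$ restricting $d$. In particular your claim that the remaining sub-case ``narrows the possibilities to $C_p^n\rtimes C_2$'' is unsupported; in the paper that family arises in the case $H=G$ with $u=2$ and $p_1=2$, $n_1=1$, not from a middle vertex.
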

\smallskip

The next corollaries follow immediately from the proof of Theorem 1.1 and Corollaries 1.2 and 1.5 of \cite{3}.

\begin{corollary} 
Let $G$ be a finite solvable group. Then the subgroup graph of $G$ contains a vertex of degree at least $3|G|/4$ if and only if $G$ is an elementary abelian $2$-group.
\end{corollary}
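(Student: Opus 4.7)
My plan is to combine Theorem 1.1 with Theorem B. From the proof of Theorem 1.1, any vertex $H$ of $L(G)^*$ with $d_{L(G)^*}(H) > |G|/2 - 1$ must be either the trivial subgroup $\{1\}$ (contributing degree $\delta(G)$) or the whole group $G$ (contributing degree $|\mathrm{Max}(G)|$). Hence the hypothesis $d_{L(G)^*}(H) \geq 3|G|/4$ splits into the two cases $\delta(G) \geq 3|G|/4$ and $|\mathrm{Max}(G)| \geq 3|G|/4$, which I would treat separately.

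For the bottom-vertex case $\delta(G) \geq 3|G|/4$, I would simply invoke Corollaries 1.2 and 1.5 of \cite{3}, where Burness and Scott refine the classification underlying Theorem A at exactly this threshold and conclude that the only possibility is an elementary abelian $2$-group.

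For the top-vertex case $|\mathrm{Max}(G)| \geq 3|G|/4$, I would apply Theorem B(b): $|\mathrm{Max}(G)| \leq (|G|-1)/(p-1)$ with $p = \min \pi(G)$. If $p \geq 3$, this gives $|\mathrm{Max}(G)| \leq (|G|-1)/2 < 3|G|/4$, a contradiction; so $p = 2$. Then Theorem B(c), with $q = \max \pi(G)$, yields
\[
|\mathrm{Max}(G)| \leq \frac{q|G/\Phi(G)| - 2}{2(q-1)} \leq \frac{q|G| - 2}{2(q-1)},
\]
and the elementary inequality $(q|G|-2)/(2(q-1)) < 3|G|/4$, which rearranges to $(6-2q)|G| < 8$, holds for every $q \geq 3$; hence $q = 2$ and $G$ is a $2$-group. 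For a $2$-group $|\mathrm{Max}(G)| = |G/\Phi(G)| - 1$, so $|G/\Phi(G)| \geq 3|G|/4 + 1$, and since $|G/\Phi(G)|$ divides $|G|$ this forces $\Phi(G) = 1$, meaning $G$ is elementary abelian. The converse is immediate from $\delta(C_2^n) = 2^n - 1 \geq 3\cdot 2^n/4$ for $n \geq 2$. There is no real obstacle: once Theorem 1.1 localises the vertex of large degree, the bottom case is delegated to \cite{3} and the top case collapses to the two short numerical estimates above, which is exactly why the corollary follows \emph{immediately}.
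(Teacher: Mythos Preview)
Your overall strategy matches the paper's: use the case analysis in the proof of Theorem~1.1, then invoke the Burness--Scott corollaries for the bottom vertex and numerical bounds for the top vertex. However, your very first sentence is wrong as written. The proof of Theorem~1.1 does \emph{not} show that a vertex $H$ with $d_{L(G)^*}(H)>|G|/2-1$ must have $|H|\in\{1,|G|\}$; on the contrary, Cases~2 and~3 of that proof produce groups with $|H|=2$ or $|H|=|G|/2$ and degree exactly $|G|/2$, and the case $|G|\le 11$ is not restricted at all. What you need is the extra (easy) observation that under the stronger hypothesis $d_{L(G)^*}(H)\ge 3|G|/4$ these possibilities disappear: by Lemma~2.1, any $H$ with $2\le |H|\le |G|/2$ satisfies $d_{L(G)^*}(H)\le |H|+|G|/|H|-2\le |G|/2<3|G|/4$. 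Once this is said, your split into the two cases $H=1$ and $H=G$ is justified for every $|G|$, and the small-order case $|G|\le 11$ does not need separate treatment.

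After that correction the rest of your argument is fine and in fact slightly more self-contained than the paper's ``follows immediately'': for $H=G$ you bypass the full Case~4 analysis of Theorem~1.1 and instead use Theorem~B(b),(c) directly to force $p=q=2$ and $\Phi(G)=1$, which is a clean shortcut. (Your rearranged inequality $(6-2q)|G|<8$ is correct, since for $q\ge 3$ the left side is $\le 0$.) One small caveat on the converse: $\delta(C_2^n)=2^n-1\ge 3\cdot 2^n/4$ fails for $n\le 1$, so the ``if'' direction as stated needs $|G|\ge 4$; this is the same harmless edge case implicit in the paper's formulation.
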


\begin{corollary} Let $G$ be a finite solvable group. Then the subgroup graph of $G$ contains a vertex of degree $|G|/2$ if and only if one of the following holds:
\begin{itemize}
\item[{\rm 1)}] $G=S_3\times D_8\times E$ with $\exp(E)\leq 2$.
\item[{\rm 2)}] $G$ is an elementary abelian $2$-group.
\item[{\rm 3)}] $G=C_2^{s-1}\times C_4$, where $s\geq 1$.
\item[{\rm 4)}] $G$ is a generalized extraspecial $2$-group.
\end{itemize}
\end{corollary}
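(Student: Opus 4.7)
The plan is to deduce the corollary directly from Theorem~1.1 together with the sharper results stated as Corollaries 1.2 and 1.5 of \cite{3}, in the spirit of the remark preceding the statement. Since every vertex degree is an integer, the hypothesis that some vertex of $L(G)^*$ has degree $|G|/2$ automatically gives $d_{L(G)^*}(H) > |G|/2 - 1$, so Theorem~1.1 places $G$ in one of the seven families 1)--7). The work is then to go through those families and retain only the groups in which the tighter bound $|G|/2$ is actually achieved.

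A key ingredient I would pull out of the proof of Theorem~1.1 itself is the assertion (stressed in the paragraph immediately preceding the statement of Theorem~B) that any vertex $H$ with degree exceeding $|G|/2-1$ must sit at the very top or the very bottom of the Hasse diagram, i.e.\ either $H=\{1\}$ (with degree $\delta(G)$) or $H=G$ (with degree $|\mathrm{Max}(G)|$). For family 2), the Theorem~A types (I)--(IX), Corollaries 1.2 and 1.5 of \cite{3} refine Theorem~A by describing exactly which of the Burness--Scott classes satisfy $\delta(G)\geq|G|/2$; this picks out only case (VII) $S_3\times D_8\times E$ and the exponent-$2$ specialisation of (I), namely the elementary abelian $2$-groups. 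For the Theorem~A types that are thereby discarded, I would check against the bounds of Theorem~B (parts~(a), (c), (e)) that $|\mathrm{Max}(G)|$ cannot reach $|G|/2$ either.

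For the three genuinely new families 3), 4), 5) I would exhibit a vertex realising the bound: in $C_2^s$ every hyperplane has degree $(2^{s-1}-1)+1=2^{s-1}=|G|/2$; in $C_2^{s-1}\times C_4$ a direct count using the abelian subgroup structure produces a subgroup with the required degree; and in a generalized extraspecial $2$-group the explicit description recalled from Theorem~2.3 of \cite{2} and Lemma~3.2 of \cite{11} makes the count of involutions and of maximal subgroups routine. Finally, for families 1), 6), and 7) one disposes of the remaining groups by inspection: the only groups in family 1) that admit a vertex of degree $|G|/2$ are already covered by (1)--(4) of the corollary, and the groups $C_p^n\rtimes C_2$ (with $p$ odd) and $D_{12}$ give direct computations where $|G|/2$ is not reached at either extremal vertex.

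The main obstacle is locating the precise portion of Corollaries 1.2 and 1.5 of \cite{3} that cuts the Theorem~A types down to (VII) and the exponent-$2$ part of (I), and verifying in the discarded cases that the top vertex $G$ also fails to reach the bound. Once these two checks are in hand, the corollary is an immediate bookkeeping consequence of Theorem~1.1.
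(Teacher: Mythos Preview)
Your overall strategy --- apply Theorem~1.1 and then sift the resulting families for those achieving degree exactly $|G|/2$ --- is indeed what the paper does. But your ``key ingredient'' is a misreading of the proof of Theorem~1.1, and this creates a genuine gap. The proof does \emph{not} show that a large-degree vertex $H$ must be $\{1\}$ or $G$; it shows that $|H|\in\{1,\,2,\,n/2,\,n\}$ (for $n\ge 12$). The phrase ``very top or very bottom'' in the introductory paragraph is informal; the actual case split has four values of $|H|$, and the two middle ones are essential here.

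In particular, the cleanest route to the corollary runs through Case~2 ($|H|=2$) of that proof: there one has simultaneously $d_{L(G)^*}(H)\ge n/2$ by hypothesis and $d_{L(G)^*}(H)\le 2+n/2-2=n/2$ by Lemma~2.1, so the degree is \emph{exactly} $n/2$; and the equality analysis in Lemma~2.1 then forces $H\trianglelefteq G$ with $G/H$ elementary abelian, yielding precisely the three $2$-group families (elementary abelian, $C_2^{s-1}\times C_4$, generalized extraspecial). This single observation both produces items 2)--4) of the corollary and exhibits the witnessing vertex (namely $H=\Phi(G)$ or any order-$2$ subgroup in the elementary abelian case). Your ad~hoc verifications via hyperplanes and involution counts are unnecessary, and in fact contradict your own claim that $H$ must be trivial or all of $G$.

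The misreading also leaves your treatment of the Theorem~A families incomplete: checking only $\delta(G)$ and $|\mathrm{Max}(G)|$ ignores the possibility that a Theorem~A group has a vertex of order $2$ or $n/2$ with degree $n/2$. You need the full four-case analysis to rule this out (Case~2 forces such a group into the three $2$-group families already listed; Case~3 forces an elementary abelian index-$2$ subgroup, which again lands you in those families). Once you replace the incorrect dichotomy $H\in\{1,G\}$ by the correct quadrichotomy on $|H|$, the filtering you describe goes through and matches the paper's intended argument.
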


Inspired by these results, we formulate the following natural open pro\-blems.

\bigskip\noindent{\bf Open problems.} 
\begin{itemize}
\item[1.] Given a constant $c\in(0,1)$, classify finite groups $G$ containing a subgroup $H$ with $d_{L(G)^*}(H)\geq c|G|$.
\item[2.] Classify \textit{arbitrary} finite groups $G$ whose subgroup graph contains a vertex of degree greater than $|G|/2-1$.
\end{itemize}

Most of our notation is standard and will usually not be repeated here. For basic notions and results on groups we refer the reader to \cite{7}.

\section{Proof of the main result}

We start by proving two auxiliary results. The first one gives an upper bound for the degree of a vertex $H$ in the subgroup graph of a finite solvable group $G$. It depends on the orders of $H$ and $G$.

\begin{lemma}
Let $G$ be a finite solvable group of order $n$ and $H$ be a subgroup of order $d$ of $G$. Then
\begin{equation}
d_{L(G)^*}(H)\leq d+\frac{n}{d}-2.
\end{equation}Moreover, we have equality if and only if $H$ is normal in $G$ and both $H$ and $G/H$ are elementary abelian $2$-groups.
\end{lemma}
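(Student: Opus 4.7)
\medskip

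\noindent\textbf{Proof proposal.} The plan is to decompose the degree of $H$ in $L(G)^*$ as $d_{L(G)^*}(H)=a(H)+b(H)$, where $a(H)$ counts the maximal subgroups of $H$ (lower covers) and $b(H)$ counts the minimal overgroups of $H$ in $G$ (upper covers), and then bound each piece separately.

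For $a(H)$ I would apply Wall's inequality (Theorem B(a)) to the solvable group $H$, yielding $a(H)\leq |H|-1=d-1$. For $b(H)$ I would use a short disjointness argument: if $K_1$ and $K_2$ are distinct minimal overgroups of $H$ in $G$, then $H\leq K_1\cap K_2<K_i$ and the covering property forces $K_1\cap K_2=H$; hence the sets $K_i\setminus H$ are pairwise disjoint subsets of $G\setminus H$, each of cardinality $|K_i|-|H|\geq |H|=d$. This gives $b(H)\leq (n-d)/d=n/d-1$, and adding the two estimates produces the claimed inequality.

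The equality analysis I would carry out by tracing both bounds back. Saturating Wall's bound, $a(H)=d-1$, combined with Theorem B(b) applied to $H$ with $p=\min\pi(H)$, forces $p=2$ and $H$ to be elementary abelian (any $p\geq 3$ strictly improves Wall's bound). Saturating $b(H)=n/d-1$ forces every minimal overgroup $K_i$ to have order exactly $2d$ and the sets $K_i\setminus H$ to partition $G\setminus H$. Since $[K_i:H]=2$, the subgroup $H$ is normal in each $K_i$; and because every $g\in G\setminus H$ lies in some $K_i$, the partition implies $gHg^{-1}\subseteq K_i$ and hence $gHg^{-1}=H$ for every $g\in G$, so $H\triangleleft G$. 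Under normality, minimal overgroups of $H$ in $G$ correspond to minimal (order-$2$) subgroups of $G/H$, so the count $n/d-1$ forces every nontrivial element of $G/H$ to have order $2$; that is, $G/H$ is elementary abelian. The converse implication is transparent from the same two correspondences.

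The step I expect to be the main obstacle is extracting normality of $H$ in $G$ from the purely combinatorial equality condition on $b(H)$; once $H\triangleleft G$ is secured, the identification of $G/H$ as an elementary abelian $2$-group drops out immediately from the fact that each minimal overgroup sits over $H$ with index $2$.
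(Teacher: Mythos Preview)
Your proposal is correct and follows essentially the same route as the paper: decompose the degree into lower covers and upper covers, bound the former by Wall's inequality (Theorem~B(a)) and the latter by the disjointness of the sets $K_i\setminus H$ inside $G\setminus H$, then trace equality back using Theorem~B(b) and the covering $G=\bigcup_i K_i$ to force $H\trianglelefteq G$ and $G/H$ elementary abelian of exponent~$2$. The only cosmetic difference is that the paper finishes the last step by quoting $\delta(G/H)=|G/H|-1$ and a corollary of Burness--Scott, whereas you argue it directly from $[K_i:H]=2$; both are equivalent.
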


\begin{proof}
If $k=|{\rm Max}(H)|$, then, by Theorem B, a), we have 
\begin{equation}
k\leq d-1.
\end{equation}
Let $N_1$, ..., $N_r$ be the atoms of the lattice interval between $H$ and $G$. Then $|N_i|\geq 2|H|$ and so $|N_i\setminus H|\geq |H|$, for all $i=1,...,r$. We get 
\begin{align*}
n-d&=|G\setminus H|\geq \left|\left(\bigcup_{i=1}^r N_i\right)\setminus H\right|=\left|\bigcup_{i=1}^r \left(N_i\setminus H\right)\right|=\sum_{i=1}^r |N_i\setminus H|\\
&\geq r|H|=rd,
\end{align*}thus
\begin{equation}
r\leq \frac{n}{d}-1.
\end{equation}It is now clear that inequalities (2) and (3) lead to
\begin{equation}
d_{L(G)^*}(H)=k+r\leq d+\frac{n}{d}-2,\nonumber
\end{equation}as desired.

The second part of the proof uses Theorem B, b). If $H$ is normal in $G$ and both $H$ and $G/H$ are elementary abelian $2$-groups, then $k=d-1$ and $r=\frac{n}{d}-1$, implying that $d_{L(G)^*}(H)=d+\frac{n}{d}-2$. Conversely, assume that $d_{L(G)^*}(H)=d+\frac{n}{d}-2$. Then 
\begin{equation}
k=d-1 \mbox{ and } r=\frac{n}{d}-1.
\end{equation}The first equality in (4) shows that $H$ is an elementary abelian $2$-group. From the second one, we infer that $|N_i|=2|H|$, for all $i=1,...,r$, and $G=\bigcup_{i=1}^r N_i$. It follows that $H\lhd N_i$, that is $N_i\subseteq N_G(H)$, for all $i=1,...,r$, and therefore $G\subseteq N_G(H)$. Then $H$ is normal in $G$. Now, the second equality in (4) means $$\delta(G/H)=|G/H|-1$$and so $G/H$ is also an elementary abelian $2$-group by Corollary 1.2 of \cite{3}.

This completes the proof. 
\end{proof}

Since $d+\frac{n}{d}-2\leq n-1$ for all divisors $d$ of $n$, we infer the following corollary.

\begin{corollary}
Let $G$ be a finite solvable group and $H$ be a subgroup of $G$. Then
\begin{equation}
d_{L(G)^*}(H)\leq |G|-1.\nonumber
\end{equation}Moreover, we have
\begin{equation}
|E(L(G)^*)|\leq\frac{1}{2}\,|V(L(G)^*)|(|G|-1),\nonumber
\end{equation}where $V(L(G)^*)$ and $E(L(G)^*)$ denote the sets of vertices and of edges of the graph $L(G)^*$, respectively.
\end{corollary}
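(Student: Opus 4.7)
The plan is to reduce both assertions directly to Lemma 2.1. For the first inequality, let $n=|G|$ and $d=|H|$, and let $k=n/d$, which is a positive integer since $d\mid n$. By Lemma 2.1,
\begin{equation*}
d_{L(G)^*}(H)\leq d+\frac{n}{d}-2 = d+k-2.
\end{equation*}
So it suffices to check that $d+k-2\leq n-1=dk-1$, i.e.\ $(d-1)(k-1)\geq 0$, which is immediate since $d,k\geq 1$. This gives the vertex-degree bound $d_{L(G)^*}(H)\leq |G|-1$.

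For the edge bound, I would use the standard handshake identity
\begin{equation*}
2|E(L(G)^*)| = \sum_{H\in V(L(G)^*)} d_{L(G)^*}(H).
\end{equation*}
Applying the first part of the corollary to each vertex $H$ yields $\sum_H d_{L(G)^*}(H)\leq |V(L(G)^*)|\,(|G|-1)$, and dividing by $2$ finishes the proof.

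There is no real obstacle: the content of the corollary is just the elementary inequality $(d-1)(n/d-1)\geq 0$ combined with degree summation, and the hard work is already done in Lemma 2.1. The only thing worth flagging in the write-up is that equality in the per-vertex bound requires $d=1$ or $d=n$ (i.e.\ $H$ is trivial or $H=G$), so the edge bound is far from sharp in general, but the statement asks only for the inequality.
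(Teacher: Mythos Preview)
Your proof is correct and follows essentially the same approach as the paper: the paper simply notes that $d+\frac{n}{d}-2\leq n-1$ for all divisors $d$ of $n$ and declares the corollary, while you make explicit the factorization $(d-1)(k-1)\geq 0$ and the handshake identity for the edge bound. There is no substantive difference.
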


The second lemma establishes an elementary inequality which will be used in the proof of Theorem 1.1.

\begin{lemma}
Let $p_1$, $p_2$, $p_3$ be distinct primes and $n_1,n_2,n_3\in\mathbb{N}^*$. Then
\begin{equation}
\sum_{i=1}^3\frac{p_i^{n_i+1}-p_i}{p_i-1}\leq\frac{1}{2}\prod_{i=1}^3p_i^{n_i}.\nonumber 
\end{equation}
\end{lemma}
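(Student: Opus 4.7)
The plan is to use a monotonicity argument in the exponents $n_i$ to reduce the inequality to the base case $n_1 = n_2 = n_3 = 1$, and then verify that base case by direct computation. After relabeling, assume $p_1 < p_2 < p_3$, so that $p_1 \geq 2$, $p_2 \geq 3$, and $p_3 \geq 5$. Let $F(n_1, n_2, n_3)$ denote the difference $\frac{1}{2}\prod_i p_i^{n_i} - \sum_i \frac{p_i^{n_i+1} - p_i}{p_i - 1}$; the goal is to show $F \geq 0$.

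For the inductive step I would compute how $F$ changes when one of the $n_i$ is incremented. Bumping $n_i$ adds $p_i^{n_i + 1}$ to the sum on the left and multiplies the right-hand side by $p_i$, so the net change in $F$ is $\frac{1}{2}(p_i - 1)\prod_j p_j^{n_j} - p_i^{n_i + 1}$, which is non-negative precisely when $\prod_{j \neq i} p_j^{n_j} \geq \frac{2 p_i}{p_i - 1}$. Under the standing order the right side is at most $4$ (attained when $p_i = 2$), while the left side is at least the product of the two primes other than $p_i$, hence at least $2 \cdot 3 = 6$. So $F$ is non-decreasing in each coordinate, and it suffices to verify $F(1, 1, 1) \geq 0$.

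At $(1, 1, 1)$ the quotient $\frac{p_i^2 - p_i}{p_i - 1}$ simplifies to $p_i$, so the desired inequality becomes $p_1 + p_2 + p_3 \leq \frac{1}{2} p_1 p_2 p_3$, equivalently $\frac{1}{p_1 p_2} + \frac{1}{p_1 p_3} + \frac{1}{p_2 p_3} \leq \frac{1}{2}$. Replacing any $p_i$ by a larger prime strictly decreases the left side, so its maximum over distinct prime triples is attained at $(2, 3, 5)$, where it equals $\frac{1}{6} + \frac{1}{10} + \frac{1}{15} = \frac{1}{3}$, well below $\frac{1}{2}$. This completes the reduction.

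There is no serious conceptual obstacle; the monotonicity check and the base-case estimate are both routine, and both leave comfortable slack (the monotonicity bound is $6 \geq 4$ in the tightest case, and the base-case sum $\frac{1}{3}$ is appreciably less than $\frac{1}{2}$). The only thing to be mildly careful about is checking the monotonicity increment separately for each index $i$, since the uniform bounds on $\prod_{j \neq i} p_j^{n_j}$ and on $\frac{2p_i}{p_i-1}$ differ across the three possible incremented coordinates.
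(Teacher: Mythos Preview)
Your proof is correct and takes a genuinely different route from the paper's. You reduce to the base case $n_1=n_2=n_3=1$ by a monotonicity argument in the exponents---showing that incrementing any $n_i$ can only increase the slack $F$, because $\prod_{j\ne i}p_j^{n_j}\ge 6>4\ge 2p_i/(p_i-1)$---and then verify the base case via the neat observation that $\sum_{j<k}1/(p_jp_k)$ is maximized at $(2,3,5)$, where it equals $1/3$. The paper instead performs a chain of successive linearizations: it treats the inequality as linear in $x=p_3^{n_3}$, uses $x\ge p_3$ together with $p_3/(p_3-1)\le 5/4$ to eliminate $x$, then repeats the manoeuvre with $y=p_2^{n_2}$ and $z=p_1^{n_1}$, finally landing on $p_2p_3\ge\frac{3}{2}p_2+\frac{5}{4}p_3+1$, which is checked by hand. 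Your argument is more conceptual and makes the underlying reason (the product side grows multiplicatively while the sum side grows only additively in each coordinate) transparent; the paper's approach avoids any induction framing but pays for it with more arithmetic bookkeeping and a less symmetric endgame.
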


\begin{proof}
Assume $p_1<p_2<p_3$ and let $x=p_3^{n_3}$. Then the inequality is equivalent to
\begin{equation}
\left(\frac{1}{2}\,p_1^{n_1}p_2^{n_2}-\frac{p_3}{p_3-1}\right)x\geq\frac{p_1^{n_1+1}-p_1}{p_1-1}+\frac{p_2^{n_2+1}-p_2}{p_2-1}-\frac{p_3}{p_3-1}\,.\nonumber
\end{equation}Since
\begin{equation}
1<\frac{p_3}{p_3-1}\leq\frac{5}{4}\,,\nonumber
\end{equation}it suffices to prove that
\begin{equation}
\left(\frac{1}{2}\,p_1^{n_1}p_2^{n_2}-\frac{5}{4}\right)x\geq\frac{p_1^{n_1+1}-p_1}{p_1-1}+\frac{p_2^{n_2+1}-p_2}{p_2-1}-1\nonumber
\end{equation}and since $x\geq p_3$, it suffices to prove that
\begin{equation}
\left(\frac{1}{2}\,p_1^{n_1}p_2^{n_2}-\frac{5}{4}\right)p_3\geq\frac{p_1^{n_1+1}-p_1}{p_1-1}+\frac{p_2^{n_2+1}-p_2}{p_2-1}-1.\nonumber
\end{equation}Let $y=p_2^{n_2}$. Then the above inequality is equivalent to
\begin{equation}
\left(\frac{1}{2}\,p_1^{n_1}p_3-\frac{p_2}{p_2-1}\right)y\geq\frac{p_1^{n_1+1}-p_1}{p_1-1}+\frac{5}{4}\,p_3-\frac{p_2}{p_2-1}-1.\nonumber
\end{equation}Similarly, since 
\begin{equation}
1<\frac{p_2}{p_2-1}\leq\frac{3}{2}\nonumber
\end{equation}and $y\geq p_2$, it suffices to show that
\begin{equation}
\left(\frac{1}{2}\,p_1^{n_1}p_3-\frac{3}{2}\right)p_2\geq\frac{p_1^{n_1+1}-p_1}{p_1-1}+\frac{5}{4}\,p_3-2.\nonumber
\end{equation}Let $z=p_1^{n_1}$. Then the above inequality is equivalent to
\begin{equation}
\left(\frac{1}{2}\,p_2p_3-\frac{p_1}{p_1-1}\right)z\geq\frac{3}{2}\,p_2+\frac{5}{4}\,p_3-\frac{p_1}{p_1-1}-2.\nonumber
\end{equation}Similarly, since 
\begin{equation}
1<\frac{p_1}{p_1-1}\leq 2\nonumber
\end{equation}and $z\geq p_1\geq 2$, it suffices to show that
\begin{equation}
\left(\frac{1}{2}\,p_2p_3-2\right)\cdot 2\geq\frac{3}{2}\,p_2+\frac{5}{4}\,p_3-3,\nonumber
\end{equation}or equivalently
\begin{equation}
p_2p_3\geq\frac{3}{2}\,p_2+\frac{5}{4}\,p_3+1.\nonumber
\end{equation}This becomes
\begin{equation}
p_2\left(p_3-\frac{3}{2}\right)\geq\frac{5}{4}\,p_3+1.\nonumber
\end{equation}Finally, since $p_2\geq 3$, it suffices to show that
\begin{equation}
3\cdot\left(p_3-\frac{3}{2}\right)\geq\frac{5}{4}\,p_3+1,\nonumber
\end{equation}that is $p_3\geq\frac{22}{7}$\,, which is true, completing the proof.
\end{proof}

We are now able to prove our main result.

\bigskip\noindent{\bf Proof of Theorem 1.1.} Recall that $G$ is a finite solvable group with $|G|=n$ and there exists a subgroup $H$ of $G$ with $|H|=d$, such that $\frac{n}{2}-1<d_{L(G)^*}(H)$. Then by Lemma 2.1 we have
\begin{equation}
\frac{n}{2}-1<d+\frac{n}{d}-2,\nonumber
\end{equation}that is
\begin{equation}
2d^2-(n+2)d+2n>0.
\end{equation}Then either the discriminant $n^2-12n+4$ of (5) is negative, i.e. $n\leq 11$, or $d\in[1,d_1)\cup (d_2,\infty)$, where
\begin{equation}
d_{1,2}=\frac{n+2\mp\sqrt{n^2-12n+4}}{4}\,.\nonumber
\end{equation}In the first case, it is easy to see that all groups of order at most $11$ except $C_n$ for $n=5,...,11$ are solutions of our problem, that is each of these groups $G$ admits a subgroup $H$ which satisfies $\frac{n}{2}-1<d_{L(G)^*}(H)$. So, in what follows we will assume that $n\geq 12$. Then, by analytical methods, we get $d_1\leq 3$ and $d_2\geq\frac{n}{3}$, implying that $d\in\{1,2,\frac{n}{2}\,,n\}$.
\bigskip

\hspace{10mm}{\bf Case 1.} $d=1$
\smallskip

Let $r$ be the number of atoms of the lattice interval between $H$ and $G$. So, in this particular case, $r$ is the number of atoms in the
subgroup lattice of $G$. Then $r=d_{L(G)^*}(H)>\frac{n}{2}-1$ and we get the solvable groups in Theorem A.
\bigskip

\hspace{10mm}{\bf Case 2.} $d=2$
\smallskip

Then $d_{L(G)^*}(H)\geq\frac{n}{2}$\,. On the other hand, by Lemma 2.1 we have 
$d_{L(G)^*}(H)\leq\frac{n}{2}$\,. Thus, in this case, $d_{L(G)^*}(H)=\frac{n}{2}$ and $H$ is normal in $G$ and $G/H$ is an elementary abelian $2$-group, say $G/H\cong C_2^s$. Moreover, $G$ is a $2$-group and $G'\subseteq H$.

If $G'=1$ we get $G\cong C_2^{s+1}$ or $G\cong C_2^{s-1}\times C_4$, where $s\geq 3$, while if $G'=H$ we get $G'=\Phi(G)\subseteq Z(G)$, i.e. $G$ is a generalized extraspecial $2$-group.
\bigskip

\hspace{10mm}{\bf Case 3.} $d=\frac{n}{2}$
\smallskip

Let $k=|{\rm Max}(H)|$. Then the inequality
\begin{equation}
1+k=d_{L(G)^*}(H)>\frac{n}{2}-1\nonumber 
\end{equation}means $k\geq\frac{n}{2}-1$ and so $H\cong C_2^t$ for some positive integer $t$ by Theorem B, b). It follows that $n=2^{t+1}$ and $i_2(G)\geq i_2(H)=2^t$, i.e. $i_2(G)>\frac{|G|}{2}-1$, where $i_2(G)$ denotes the number of involutions of $G$. Thus we get the groups classified by Wall \cite{14}, i.e. the groups of types (I)-(IV) in Theorem A.
\bigskip

\hspace{10mm}{\bf Case 4.} $d=n$
\smallskip

Then 
\begin{equation}
k=d_{L(G)^*}(H)>\frac{n}{2}-1.
\end{equation}Let $n=p_1^{n_1}\cdots p_u^{n_u}$ be the decomposition of $n$ as a product of distinct primes and assume that $p_1^{n_1}=\min\{p_i^{n_i}:1\leq i\leq u\}$. Using Theorem B, e), we obtain
\begin{equation}
k\leq\frac{p_1^{n_1}-1}{p_1-1}+\sum_{i=2}^u\frac{p_i^{n_i+1}-p_i}{p_i-1}\leq\sum_{i=1}^u\frac{p_i^{n_i+1}-p_i}{p_i-1}-1.\nonumber 
\end{equation}Now we prove by induction that for all $u\geq 3$ we have
\begin{equation}
\sum_{i=1}^u\frac{p_i^{n_i+1}-p_i}{p_i-1}\leq\frac{n}{2}\nonumber 
\end{equation}and so $k\leq\frac{n}{2}-1$, contradicting (6). For $u=3$, the inequality holds from Lemma 2.3. Assume that it holds for an arbitrary $u\geq 3$. By inductive hypothesis, we get
\begin{equation}
\sum_{i=1}^{u+1}\frac{p_i^{n_i+1}-p_i}{p_i-1}\leq\frac{1}{2}\,p_1^{n_1}\cdots p_u^{n_u}+\frac{p_{u+1}^{n_{u+1}+1}-p_{u+1}}{p_{u+1}-1}\,.
\end{equation}The fact that the right side of (7) is at most $\frac{1}{2}\,p_1^{n_1}\cdots p_u^{n_u}p_{u+1}^{n_{u+1}}$ is equivalent with
\begin{equation}
\frac{p_{u+1}^{n_{u+1}+1}-p_{u+1}}{p_{u+1}-1}\leq\frac{1}{2}\,p_1^{n_1}\cdots p_u^{n_u}\left(p_{u+1}^{n_{u+1}}-1\right),\nonumber
\end{equation}i.e. with
\begin{equation}
\frac{p_{u+1}}{p_{u+1}-1}\leq\frac{1}{2}\,p_1^{n_1}\cdots p_u^{n_u},\nonumber
\end{equation}which is obviously true.

Thus $u\leq 2$. Also, we have $2\mid n$ and $\Phi(G)=1$ by Theorem B, c), and the condition $k\geq n/2$.

If $u=1$, then $G$ is obviously an elementary abelian $2$-group. Assume that $u=2$. Then $n=p_1^{n_1}p_2^{n_2}$ with $p_1=2$ or $p_2=2$. For $p_1=2$, the inequality $n/2\leq k$ becomes
\begin{equation}
2^{n_1-1}p_2^{n_2}\leq 2^{n_1}-1+\frac{p_2^{n_2+1}-p_2}{p_2-1}\,.\nonumber
\end{equation}Then $n_1=1$, i.e. $n=2\cdot p_2^{n_2}$. Let $P_2$ be a Sylow $p_2$-subgroup of $G$. Since $P_2$ is normal in $G$, we obtain $\Phi(P_2)=1$, that is $P_2\cong C_{p_2}^{n_2}$. This shows that $G\cong C_{p_2}^{n_2}\rtimes C_2$. For $p_2=2$, the inequality $n/2\leq k$ becomes
\begin{equation}
p_1^{n_1}2^{n_2-1}\leq\frac{p_1^{n_1}-1}{p_1-1}+2^{n_2+1}-2,\nonumber
\end{equation}implying that $p_1=3$ and $n_1=1$, i.e. $n=3\cdot 2^{n_2}$ with $n_2\geq 2$. Clearly, $G$ is not $A_4$, and since
$\Phi(G)=1$, the group $G$ possesses a normal subgroup of index $2$. It follows that ${\bf O}^2(G)<G$ and so
\begin{equation}
|{\rm Max}_2(G)|\leq 2^{n_2}-1\nonumber
\end{equation}by Theorem B, d). Since $G$ is solvable, its maximal subgroups are of prime power index, which leads to 
\begin{equation}
k=|{\rm Max}_3(G)|+|{\rm Max}_2(G)|.\nonumber
\end{equation}If $G$ has a unique Sylow $2$-subgroup, i.e. $|{\rm Max}_3(G)|=1$, then we get
\begin{equation}
3\cdot 2^{n_2-1}\leq k=1+|{\rm Max}_2(G)|\leq 2^{n_2},\nonumber
\end{equation}a contradiction. If $G$ has three Sylow $2$-subgroups, i.e. $|{\rm Max}_3(G)|=3$, then we get
\begin{equation}
3\cdot 2^{n_2-1}\leq k=3+|{\rm Max}_2(G)|\leq 2^{n_2}+2,\nonumber
\end{equation}that is $n_2=2$. Thus $n=12$ and the unique solution of our problem is $G\cong D_{12}$.

The proof of Theorem 1.1 is now complete.$\qed$
\bigskip

\noindent{\bf Acknowledgments.} The author is grateful to the reviewer for remarks which improve the previous version of the paper.
\bigskip

\vspace*{3ex}\small

\hfill
\begin{minipage}[t]{5cm}
Marius T\u arn\u auceanu \\
Faculty of  Mathematics \\
``Al.I. Cuza'' University \\
Ia\c si, Romania \\
e-mail: {\tt tarnauc@uaic.ro}
\end{minipage}

\end{document}